\documentclass[final]{siamltex}
\pdfoutput=1
\usepackage{times}
\usepackage[utf8x]{inputenc}
\usepackage{amsmath,amsfonts,amssymb,amsopn,amscd}
\usepackage{graphicx,color,colordvi}

\graphicspath{{Figures/}}
\newtheorem{remark}{\it Remark\/}
 

\newcommand{\suml}{\sum\limits}

\usepackage{ifthen}
\newboolean{DisplaySOS}
\setboolean{DisplaySOS}{false} 

\newcommand{\eq}[1]{(\ref{#1})}
\newcommand{\SOS}[1]{\ifthenelse{\boolean{DisplaySOS}}{{\textcolor{blue}{\bf[#1]}}}{}}

 \newcommand{\Modif}[1]{\ifthenelse{\boolean{DisplaySOS}}{{\textcolor{red}{\bf{#1}}}}{#1}}

\newcounter{example}[section]

\newcommand{\matC}[2]{\left[ \begin{matrix}
#1\\
#2 
\end{matrix}\right] }

\title{A new twist for the simulation of hybrid systems using the true jump method}

\author{Romain VELTZ}
\pagestyle{myheadings}
\thispagestyle{plain}
\begin{document}
\maketitle
\begin{abstract}
The use of stochastic models, in effect piecewise deterministic Markov processes (PDMP), has become increasingly popular especially for the modeling of chemical reactions and cell biophysics. Yet, exact simulation methods, for the simulation of these models in evolving environments, are limited by the need to find the next jumping time at each recursion of the algorithm. Here, we report on a new general method to find this jumping time for the \textit{True Jump Method}. It is based on an expression in terms of ordinary differential equations for which efficient numerical methods are available. As such, our new result makes it possible to study numerically stochastic models for which analytical formulas are not available thereby providing a way to approximate the state distribution for example. We conclude that the wide use of event detection schemes for the simulation of PDMPs should be strongly reconsidered. The only relevant remaining question being the \Modif{efficiency} of our method compared to the \textit{Fictitious Jump Method}, question which is strongly case dependent.
\end{abstract}

\begin{keywords}
PDMP, piecewise deterministic Markov process; ODE, ordinary differential equations.
\end{keywords}

\SOS{CHV a l'air meilleure si R croit le long des trajectoires}

\SOS{Pas de vraie review de simulations}

\section{Introduction}
{T}here is a growing number of processes (from biophysics, ecology, finance, internet traffic, queuing...) that are inaccurately modeled with ODEs. A classical example is the one of chemical reactions for which the reactants are well stirred but in such a small number that a continuous description is inappropriate. A description in term of a Markov process \Modif{is more relevant in this case} \cite{gillespie_exact_1977}. 

The study of Markov models, with intrinsic noise, comes with the computation of the distribution of the states at a given time. Other methods, aimed at understanding the noisy dynamics, rely on sample paths \cite{arnold_random_1998}.
Simulation methods which provide exact sample paths are very helpful for the computation of the state distribution. There are two broad classes of simulation methods \cite{graham_stochastic_2013,cocozza-thivent_processus_1997}, the \textit{Fictitious Jump Method}\footnote{also known as rejection method} (FJM) and the \textit{True Jump Method} (TJM), see appendices. The last method was long known before being popularized by Gillespie \cite{gillespie_exact_1977,gibson_efficient_2000} such that it bears the name of the Doob-Gillespie algorithm.

There are several limitations to the above algorithms. The first occurs when the number of chemical reactions is large, the second occurs when there is a subpopulation with very fast kinetics compared to the other reactions and the third being if the \Modif{transition} rates\footnote{also called \textit{propensity functions} in the case of chemical reactions}, are time dependent (in the case of the TJM). The first two limitations lead to very long simulation times while the last one requires to accurately solve an integral equation at each iteration which can be difficult and time consuming \cite{anderson_modified_2007} (but see \cite{cocozza-thivent_processus_1997} for a FJM version).

The first two limitations can be addressed by using an approximate algorithm, for example the \textit{tau-leaping method} \cite{gillespie_approximate_2001}, or by using a time-scale separation \cite{kang_separation_2013} which provides a PDMP as limit of the suitably scaled Markov model. In short, the limit is composed of an ODE, averaging the fast component of the Markov model and a jump process representing the slow component of the original Markov model.

Finally, PDMPs \cite{davis_piecewise-deterministic_1984,davis_markov_1993} also arise in models in which one wants to couple deterministic dynamics to a chemical process, as for example in the modeling of the membrane potential of a neuron coupled with the stochastic opening/closing of the ions channels \cite{bressloff_stochastic_2014,pakdaman_fluid_2010}. The PDMPs are composed of two components: a continuous one described by the flow of an ODE and a jump process - that affects the continuous flow and describes the stochastic component of the process. Note that this class of Markov processes encompasses the one of chemical reactions with time dependent reaction rates as a subtype.

This motivates the need for the computation of \textit{exact} sample paths of PDMPs. There are here two possibilities:
\begin{enumerate}
\item \Modif{If the continuous flow is known analytically, then the FJM is an exact method. Unless the integral equation in the TJM can be solved analytically as well, the TJM is not \textit{exact} in this case and should be discarded in favor of the FJM.},
\item \Modif{If the continuous flow is not known analytically, none of both methods is exact. The error made in the simulation comes from the need to compute numerically the continuous flow, and in the case of the TJM, from the numerical computation of the jumping times. Hence, compared to the FJM, the TJM adds another error source. We shall see that our method adresses this last point.} 
\end{enumerate}

The simulation of PDMPs \cite{alfonsi_adaptive_2005,brandejsky_numerical_2010,gaujal_perfect_2008,riedler_almost_2013} is reviewed in \cite{riedler_almost_2013} where the author justifies theoretically the original method of \cite{alfonsi_adaptive_2005}, albeit focusing only on the TJM. They propose a method based on an ODE solver with event detection \cite{shampine_solving_2003} where the event is the integral equation that needs to be solved to find the jump instants (see appendix~\ref{app:true}). Such numerical methods may not be available and suffers from two majors drawbacks. First, one may need to modify a library providing an ODE solver to implement the event detection. Given the complexity of such solvers \cite{hairer_solving_1996}, this may not be trivial. The second issue with the use of event detection scheme is that the time-step is \textit{only} adapted to solve the ODE and not to locate precisely the event. This is a general word of caution concerning event detection for ODE, it is an ill-posed problem \cite{shampine_solving_2003} and may lead to excessively large numerical errors as we shall see. Finally, note that all examples in \cite{riedler_almost_2013} are non-stiff, hence being relatively easy to solve numerically. We address all these issues with one theoretical result.

\Modif{Here}, we report on a new simulation algorithm derived from the TJM. It is based on the assumption that a regular ODE solver is available without further capabilities like event detection. The precision of our method is exactly the one of the ODE solver \cite{skeel_thirteen_1986,hairer_solving_1996} making the simulation of PDMP nothing more than the simulation of an ODE. In particular, global error estimation \cite{skeel_thirteen_1986,viswanath_global_2001,cao_posteriori_2004} for ODE directly leads to global error estimation of the simulation of the PDMPs. Our new method is based on a change of time variable and recursively solving an ODE of dimension $n+1$ where $n$ is the dimension of the continuous flow of the PDMP. Hence, it comes with little additional cost given that one needs to simulate the continuous component of the PDMP anyway.

The only relevant remaining question is how our new method compares to the FJM \Modif{when the continuous flow is not known analytically}. We will come back to this \Modif{question} and provide a numerical example.

The paper is organized as follows. We first derive the theoretical result which is the basis of our method. Then we provide three numerical examples. In the first two examples, we show that the simulation methods based on event detection are to be reconsidered. We then provide a third example showing the effectiveness of our method compared to the FJM before drawing some conclusions.

\section{Description of the PDMPs}

\Modif{In this paper, we consider a PDMP by following \cite{graham_stochastic_2013} in order to simplify the description of the process. Note that a more general definition is provided in \cite{davis_piecewise-deterministic_1984,davis_markov_1993} but we stick to \cite{graham_stochastic_2013} for simplicity. A PDMP is a jump process $(X(t))_{t\in \mathbb R^+}$ where the stochastic jump instants $T_n$ are non decreasing, isolated with limit $\lim\limits_{n\to\infty}T_n=\infty$ and the dynamics between the jumps is given by a flow of homeomorphisms $(\phi^t)_{t\geq 0}$ on $\mathbb R^d$ as follows:
\begin{equation}
\left\{
\begin{array}{l}
\forall t\geq 0,\quad X(t)=\suml_{n\geq0}\phi^{t-T_n}(X({T_n}))\mathbf{1}_{\{T_n\leq t <T_{n+1}\}},\\
0=T_0<T_1\leq T_2\leq \cdots\leq \infty,\\
T_n<\infty\Rightarrow T_n>T_{n-1}\quad and\quad X({T_n})\neq X({T_n-}).
\end{array}\right.
\end{equation}}
We assume that the continuous flow is produced by an ODE: 
\begin{equation}
\left\{
\begin{array}{l}
\frac{d \phi^t(x)}{dt}=F(\phi_t(x)),\quad t\geq 0\\
\phi_{0}(x) =x.
\end{array}\right.
\label{eq:ode1}
\end{equation} 
such that $x\to F(x)$ is locally Lipschitz. \Modif{The jump $X(T_n)$ and the inter jump interval $T_n-T_{n-1}$ are independent. The law of $X(T_n)$ given $X(T_{n-1})=x$ is $\Pi(x,dy)$ and the law of the inter jump interval follows from}
\[
P\left(T_n-T_{n-1}>t|X(T_{n-1})=x\right)=exp\left(-\int_0^tR_{tot}(\phi^s(x))ds\right)
\]
\Modif{where the total transition rate function $R_{tot}$, is measurable and satisfies $R_{tot}(x)\geq 0$.} 
\Modif{We allow $$\sup_{x\in\mathbb R^d} R_{tot}(x)\leq\infty$$ but $R_{tot}$ must be locally bounded. This implies \cite{graham_stochastic_2013} that there is a finite number of jumps on any bounded time interval.}

\Modif{
\begin{remark}
Using the usual trick, we can deal with the case where the vector field $F$ in \eq{eq:ode1} is time-dependent, \textit{i.e.} $F:(x,t)\to F(x,t)$, but the existence of PDMP holds for less restrictive assumptions: $t\to F(x,t)$ can be assumed continuous rather than Lipschitz as stated above. The same trick also allows to deal with the case where $R_{tot}$ is time-dependent as well.
\end{remark}
}

\section{Theoretical result}
A general simulation algorithm, called the True Jump Method \cite{graham_stochastic_2013}, is provided in appendix~\ref{app:true}: \Modif{it is also a general method to prove existence of PDMPs \cite{graham_stochastic_2013} under the conditions stated in the previous section}.
A serious limitation to this algorithm is the need to solve the integral equation in step 3. The limitation pertains to the case\footnote{the continuous function is stationary in this case} $F=0$ if the total transition rate function $R_{tot}$ is time-dependent. In general, even if the flow $(\phi^t)$ is known analytically, the integral equation needs to be solved numerically using Newton method or the bisection algorithm for example.
Here, we propose a solution based on an ODE solver and a change of time variable.

\begin{theorem}\label{th:1}
Let us make the general assumptions:
\begin{itemize}
\item the flow $(\phi^t)_{t\geq 0}$ is globally defined,
\item $R_{tot}(x)>0$ for all $x\in\mathbb R^d$,
\item $x\to R_{tot}(x)$ \Modif{is locally Lipschitz on $\mathbb R^d$,}
\end{itemize}
Then, the solution $(X(T_n^-),T_n)$ of the integral problem in step 3. is given by $(y(S_n),\tau(S_n))$ where
\begin{equation}\left\lbrace
\begin{array}{l}
\dot y(s)= F(y(s))\ /\ R_{tot}(y(s))\\
\dot\tau(s)=1\ /\ {R_{tot}(y(s))}\\
y(0)=X(T_{n-1}),\ \tau(0)=T_{n-1}. 
\end{array}\right.
\label{eq:th}
\end{equation}
Also $X(\tau(s)) = y(s)$ for $s\in[0,S_n)$.
\end{theorem}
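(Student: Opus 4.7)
The plan is to carry out a change of time variable that absorbs the rate function $R_{tot}$ along the deterministic flow. Step 3 of the TJM asks, given a realisation $S_n$ of an $\mathrm{Exp}(1)$ random variable, for a time increment $T_n - T_{n-1}$ solving
\[
\int_0^{T_n - T_{n-1}} R_{tot}\bigl(\phi^u(X(T_{n-1}))\bigr)\, du = S_n,
\]
together with the value $X(T_n^-) = \phi^{T_n-T_{n-1}}(X(T_{n-1}))$. Writing $\sigma(t) := \int_0^t R_{tot}(\phi^u(X(T_{n-1})))\, du$, the assumption $R_{tot}>0$ makes $\sigma$ a $C^1$ strictly increasing bijection onto its image, so it admits a $C^1$ inverse $t(\cdot)$; solving the integral equation is exactly evaluating this inverse at $S_n$.

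First I would define the candidate solution $y(s) := \phi^{t(s)}(X(T_{n-1}))$ and $\tau(s) := T_{n-1} + t(s)$, then verify \eqref{eq:th} by the chain rule. Since $t'(s) = 1/\sigma'(t(s)) = 1/R_{tot}(\phi^{t(s)}(X(T_{n-1}))) = 1/R_{tot}(y(s))$, one obtains immediately $\dot\tau(s) = 1/R_{tot}(y(s))$ and
\[
\dot y(s) = F(\phi^{t(s)}(X(T_{n-1})))\, t'(s) = F(y(s))/R_{tot}(y(s)).
\]
The initial conditions at $s=0$ are trivial since $t(0)=0$.

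Second I would argue well-posedness of the Cauchy problem \eqref{eq:th}. Local Lipschitz continuity of $F$ and $R_{tot}$ combined with $R_{tot}>0$ makes the vector field $(y,\tau)\mapsto (F(y)/R_{tot}(y),\, 1/R_{tot}(y))$ locally Lipschitz, so Cauchy--Lipschitz provides a unique maximal solution. Global existence on all of $[0,\infty)$ (in particular at $s=S_n$) comes for free: by construction $y(s)$ parametrises an orbit of $\phi$, which is globally defined by hypothesis, so $y$ cannot blow up on any bounded $s$-interval; and $\tau(s) = T_{n-1} + t(s)$ stays finite for the same reason. Uniqueness of the solution to \eqref{eq:th} then forces the identification with $(\phi^{t(s)}(X(T_{n-1})), T_{n-1}+t(s))$, yielding $\tau(S_n) = T_{n-1} + t(S_n) = T_n$ and $y(S_n) = \phi^{T_n - T_{n-1}}(X(T_{n-1})) = X(T_n^-)$. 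The final identity $X(\tau(s)) = y(s)$ for $s\in[0,S_n)$ is the restatement that on the inter-jump interval $[T_{n-1},T_n)$ the trajectory equals the flow from $X(T_{n-1})$.

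The computation itself is essentially a one-liner; the only point requiring attention is that \eqref{eq:th} is genuinely solvable up to $s = S_n$, and this is precisely what the three bullet-point assumptions (global flow, strict positivity, local Lipschitzness of $R_{tot}$) are there to guarantee.
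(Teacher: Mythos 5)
Your proof follows essentially the same route as the paper's (which is a two-line sketch): introduce $\tau(s)$ as the inverse of the integrated rate $t\mapsto\int_{T_{n-1}}^{t}R_{tot}(X(u))\,du$, differentiate, and apply the chain rule to $y(s)=X(\tau(s))$; you merely supply the well-posedness details the paper omits. One small caveat: existence of the solution of \eqref{eq:th} up to $s=S_n$ does \emph{not} come entirely for free --- if $\int_0^\infty R_{tot}(\phi^u(X(T_{n-1})))\,du<S_n$ then $\tau$ escapes to $+\infty$ at a finite value of $s$ (this is exactly the ``infinite jumping time'' scenario of the paper's first example), so global solvability holds precisely when the integral equation in step 3 itself has a finite solution, which is what the theorem implicitly presupposes.
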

\begin{proof}
The trick for solving $\int_{T_{n-1}}^uR_{tot}(X(s))ds=S_n,$ in Algorithm 1., consists in the introduction of the variable $\tau(s)\geq T_{n-1}$ defined as
\[
\int_{T_{n-1}}^{\tau(s)}R_{tot}(X(t))dt=s
\]
which gives the second equation in \eqref{eq:th} by differentiating with respect to $s$. However, we need to know $X(\tau(s))$ to solve the ODE for $\tau(s)$. Writing $y(s)\stackrel{def}{=}X(\tau(s))$ and using chain rule differentiation gives the remaining equation in \eqref{eq:th}.
\end{proof}

\

The fact that the flow $\phi$ is globally defined (no ''explosions'') is a simplifying assumption of \cite{davis_piecewise-deterministic_1984}. The theorem states that solving the integral equation in step 3. of the Algorithm 1. amounts to integrate an ODE over a time interval $[0,S_n)$ where $S_n$ is randomly drawn from an exponential distribution \Modif{$\mathcal E(1)$} at each recursion of the algorithm. 

\begin{remark}
It is easy to generalize the theorem to the case where $F,R_{tot}$ are time-dependent. One finds
\begin{equation}\left\lbrace
\begin{array}{l}
\dot y(s)= F(y(s),\tau(s))\ /\ R_{tot}(y(s),\tau(s))\\
\dot\tau(s)=1\ /\ {R_{tot}(y(s),\tau(s))}
\end{array}\right.
\end{equation}
\end{remark}

\subsection{Sampling the continuous flow}
The previous method is effective for computing the jump instants and the values of the PDMP at these instants. However, if one seeks to sample the continuous part in between jumps, there is a difficulty because one only has access to $X$ through a change of time variable $\tau$. A cheap way around this issue is by adding a Poisson process $(N(t))_{t\in\mathbb R^+}$ of constant rate $\lambda>0$ to our PDMP, thereby increasing \Modif{the dimension of our system:} $d\to d+1$. This way, Theorem 1. condition $R_{tot}(x)>0$ is always satisfied because \Modif{$R_{tot}(x)\geq \lambda>0$}. This additional jump process will add jump events at rate $\lambda$, hence sampling the continuous dynamics.

\subsection{Simulations}
The simulation of the process $(X(t))$ consists in applying the Algorithm 1. together with the use of Theorem 1. We call this procedure the CHV method. The performance of the ODE solver strongly impacts the speed and ''precision'' of the simulation. It is expected that the equations \eqref{eq:th} may be stiff depending on whether a lot (resp. a few) reactions are initiated in which case the factor $1\ /\ R_{tot}$ might assume vanishing or extremely large values. Hence, we find it convenient to use the ODE solvers \cite{hindmarsh_ODEPACK_1983} for their ability to automatically switch between stiff and non-stiff methods. Note that depending on the problem, other ODE methods \cite{hairer_solving_1996} may be more appropriate than the one suggested.

\subsection{Comparison with the FJM}
As stated in the introduction, when the flow is known analytically, there is no point in choosing a quasi-exact method \textit{e.g.} the True Jump Method instead of the exact FJM\Modif{, unless of course, if the jump instants are known analytically.}

When the flow is not known analytically, both methods \Modif{FJM/CHV} are quasi-exact in that they bear a numerical error coming from integrating the ODE underlying the continuous component. The question we ask is which method is the fastest given a numerical tolerance.

For convenience, we recall the two ODEs that we need to solve between the jumps, on an interval of length $S_n\sim\mathcal E(1)$, for each method:
\begin{equation}
(FJM):\dot x = F(x)/\lambda\quad\text{ or }\quad
(CHV):\left\{
\begin{array}{l}
\dot y  =F(y)/R_{tot}(y)\\
\dot \tau = 1 /R_{tot}(y)
\end{array}\right..
\end{equation}
If $\lambda\approx R_{tot}(x)$, then (FJM) is smaller hence faster to solve. However, if $R_{tot}(x)$ varies a lot, it may be difficult to bound it effectively using a single constant $\lambda$ as required in the FJM (see appendix~\ref{app:fjm}). Hence, the choice of the method boils down to know whether it is faster (and more accurate) to solve $k$
times\footnote{for $k$ fictitious jumps} (FJM) or a single time (CHV). This depends a lot on the time stepper and the way it copes with the numerical tolerances to produce the (adaptive) time steps.

In the case of fixed time steps, one needs to see whether the cost of the larger system (CHV) compensates the cost of the generation of the $k$ random numbers.
\Modif{Hence, if the FJM does not require a lot of fictitious jumps, this method is more appropriate than CHV}.

\Modif{Finally, if the transition rate $R_{tot}$ is zero on a open set, then the CHV method breaks down and the FJM is more appropriate.} On a practical side,  the CHV method is simpler to program but the FJM assume far less regularity for $R_{tot}$.
\section{Numerical examples}
We give two examples where our new method is particularly effective compared to the standard use of event location for solving the integral equation to compute the jumping times. Then we provide an example to compare our method to the FJM. 
\subsection{Stiff problem with possibly infinite jumping times}\label{section:tcp}
The purpose of this example is to show a case where the event location in \texttt{Matlab} is actually doing a good job and compare it to our method. To make things difficult for the event location, we chose stiff dynamics between the jumps. Note that the next jumping time can be infinite in our example. We look at the following process of switching dynamics \Modif{where $X(t) = (x_c(t),x_d(t))\in\mathbb R^2$:}
\begin{equation}\left\{
\begin{array}{l}
\dot x_c = a(t)x_c\\
a(t) = -100\times \left(2(x_d(t) \mod{2}) -1\right)
\end{array}\right.
\end{equation}
starting from $x_c(0)=1$. The (unique) transition rate is $R_{tot}(x_c,x_d) = x_c$ \Modif{associated with the jump $x_d\to x_d+1$ for which $\Pi((x_c,x_d),dy) = \delta_{(x_c,x_d+1)}(dy)$}. As $x_c$ increases, the transition rate also increases. Hence, the continuous problem is stiff because one has to make sure (numerically) that $R\geq0$ despite the abrupt decrease of $x_c$. Also, the integral equation is difficult to handle numerically with high precision. The jumping time, solution of the integral equation, reads (see Theorem~\ref{th:1}):
\begin{multline}\label{eq:ex1ana}
a \stackrel{def}{=} -100 \times \left(x_d(T_{n-1})\mod{2} -1\right)\\
T_n = T_{n-1}+\frac{1}{a}\log\left(1+aS_n/x_c(T_{n-1})\right),\\ 
X(T_n) = X(T_{n-1})+aS_n.
\end{multline}
From the formula, it appears that the next jumping time can be infinite when $1+aS_n/x_c(T_{n-1})<0$ when $a<0$. The next Figure~\ref{fig:ex1} shows an example of realization where the first 15000 jumping times are computed. To compare the methods, we draw $15000$ realizations of a random variable with exponential distribution and use this sequence for the 4 examples.
\begin{figure}[htbp!]
\centering
\includegraphics[width=\linewidth]{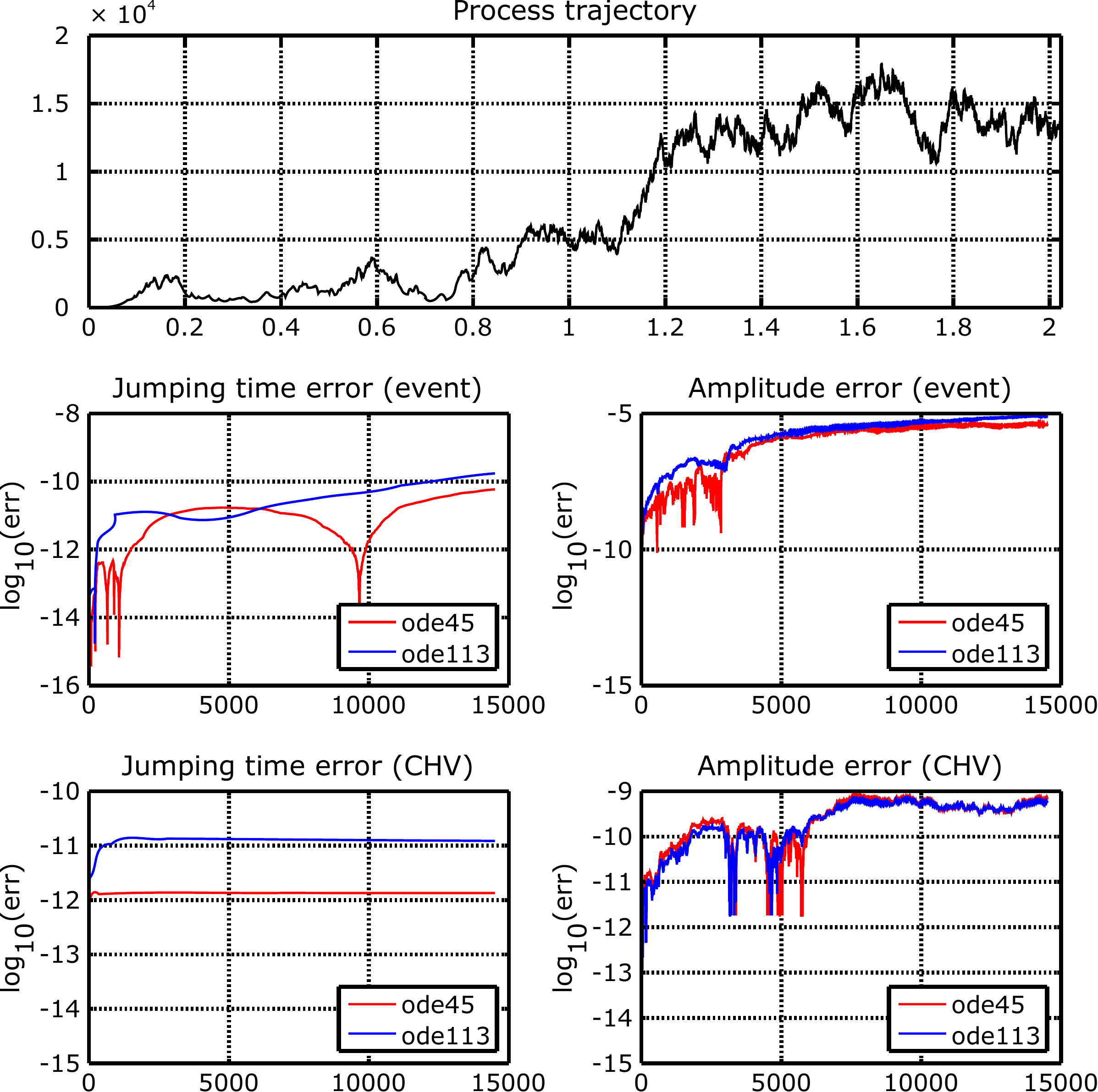}
\caption{Top: example of trajectory plotted using analytical formula \eq{eq:ex1ana}. Bottom are errors in jumping time and amplitude of the process using the event location capabilities of \texttt{Matlab} (termed event here) and using our method (termed CHV). The horizontal axis is the jump index. In both cases, the 2 smallest errors were obtained using the steppers \texttt{ode113} and \texttt{ode45}. The absolute/relative tolerance are $atol=1e-12$ and $rtol = 1e-12$.}
\label{fig:ex1}
\end{figure}

Note that we had to find a long enough trajectory because most ends quickly as the jumping time can be infinite. We see that for similar tolerances, our new method, called CHV, performs better as the number of jump events increases. Surprisingly, \texttt{ode45}, which is not designed for stiff systems performs quite well. We tried all available steppers in Matlab and only plot the two best results. 

The following table~\ref{tbl:1} provides the running times (ratios) for the cases in Figure~\ref{fig:ex1}. Despite the additional dynamical variable $\tau$ of our method, the fastest running times using either methods are almost the same. 

\begin{table}[h]
\centering
\begin{tabular}{|l|l|l|}
\hline
      & ode45 & ode113 \\ \hline
event & 1     & 1.851  \\ \hline
CHV   & 1.009 & 1.0967 \\ \hline
\end{tabular}
\caption{Computation time ratios for the examples in Figure~\ref{fig:ex1}.}
\label{tbl:1}
\end{table}

\subsection{Stiff problem with finite jumping times}
We now provide a numerical example to show that the scheme in \cite{riedler_almost_2013} can lead to wrong dynamics. We chose the following process of switching dynamics  \Modif{where $X(t) = (x_c(t),x_d(t))\in\mathbb R^2$:}
\begin{equation}\left\{
\begin{array}{lcr}
\dot x_c = 10\cdot x_c &\text{ if } & x_d \text{ even}\\
\dot x_c = -3 x_c^2 &\text{ otherwise } &
\end{array}\right.
\end{equation}
where the (unique) transition rate is $R_{tot}(x_c,x_d) = x_c$ \Modif{associated with the jump $x_d\to x_d+1$  for which $\Pi((x_c,x_d),dy) = \delta_{(x_c,x_d+1)}(dy)$}. The jumping time solution of the integral equation reads (see Theorem~\ref{th:1}):
\begin{equation}\label{eq:anaex2}
\matC{X(T_n)}{\tau(T_n)} = \left\{
\begin{array}{lcr}
\matC{10\cdot S_n+x_c(T_{n-1})}{\frac{1}{10}\log\left(1+\frac{10S_n}{x(T_{n-1})}\right)}&\text{ if } & x_d \text{ even}\\
\matC{e^{-3S_n}x(T_{n-1})}{\frac{1}{3x(T_{n-1})}\left(e^{3S_n}-1\right)}&\text{ otherwise. } & 
\end{array}\right.
\end{equation}
We see that the function $\tau$ is well-defined meaning that the jumping times are always finite. 
\begin{figure}[htbp!]
\centering
\includegraphics[width=0.99\linewidth]{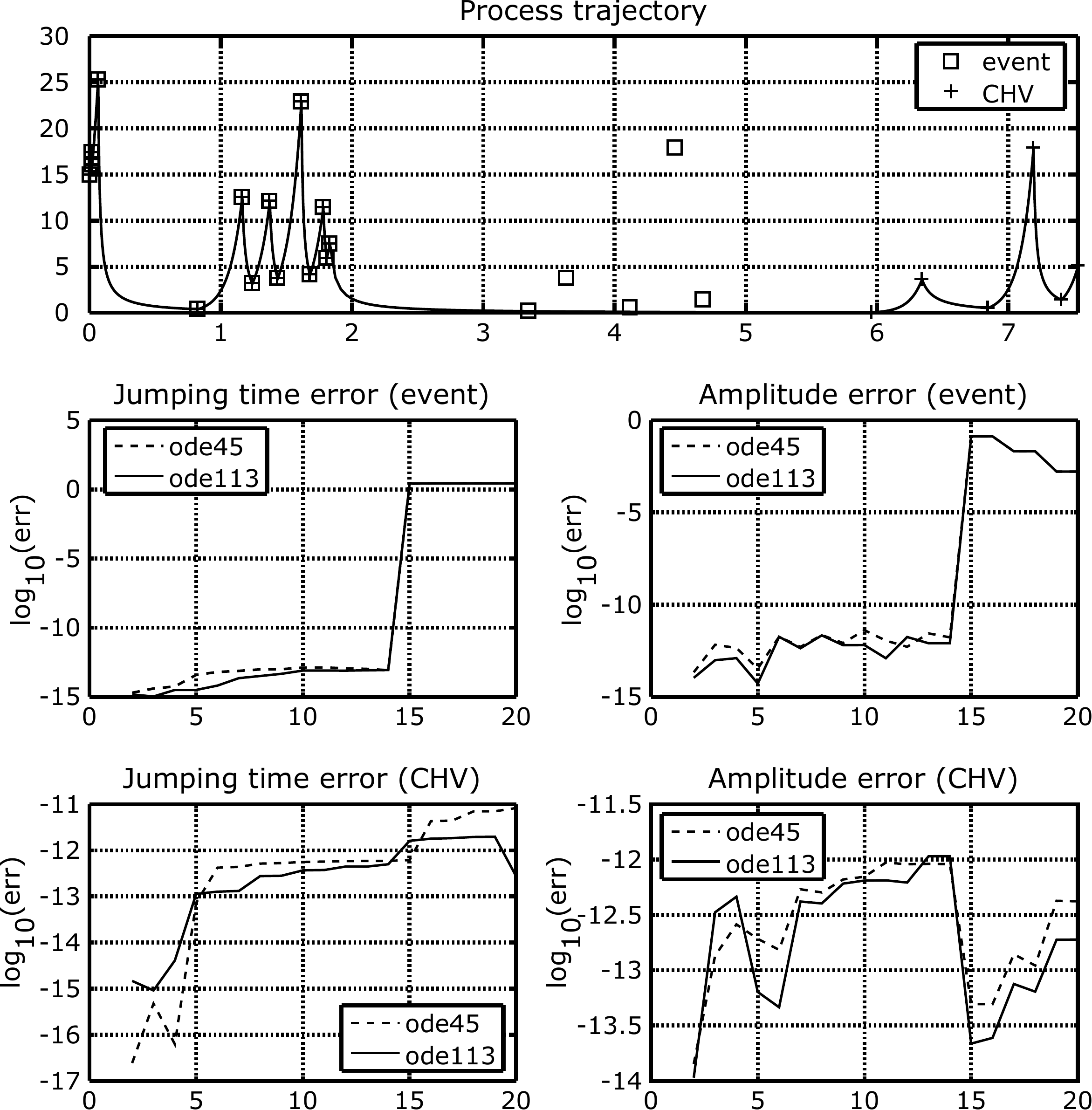}
\caption{Top: example of trajectory plotted using analytical formula \eq{eq:anaex2}. The green square are the jumps computed with the event function, the red crosses with our method. Bottom are errors in jumping time and amplitude of the process using the event location capabilities of \texttt{Matlab} (termed event here) and using our method (termed CHV). The horizontal axis is the jump index. In both cases, the 2 smallest errors were obtained using the steppers \texttt{ode113} and \texttt{ode45}. The absolute/relative tolerance are $atol=1e-12$ and $rtol = 1e-12$.}
\label{fig:ex2}
\end{figure}
The results are plotted in Figure~\ref{fig:ex2}. To compare the methods, we draw $20$ realizations of a random variable of exponential distribution and use this sequence for the 4 examples. After a few jumps (on the order of ten), the method using the \texttt{Event} function in \texttt{Matlab} fails to compute accurately a jumping time when $x_c$ is close to zero. This difficulty in computing the jumping time is also reflected in the cpu time shown in Table~\ref{tbl:ex2}. Hence, computing 100 jumps with the Event method leads to completely wrong dynamics whereas our new method gives very satisfactory results when compared to the analytical results.

\begin{table}[htbp!]
\centering
\begin{tabular}{|l|l|l|}
\hline
      & ode45 & ode113 \\ \hline
event & 4.941     & 3.569  \\ \hline
CHV   & 1.940 & 1 \\ \hline
\end{tabular}
\caption{Computation time ratios for the examples in Figure~\ref{fig:ex2}.}
\label{tbl:ex2}
\end{table}
\subsection{Comparison with the rejection method}
Finally, we provide a numerical example to compare our method to the FJM. We chose the following process of switching dynamics:
\begin{equation}\label{eq:ex3}\left\{
\begin{array}{lcr}
\dot x_c = 3\cdot x_c &\text{ if } & x_d \text{ even}\\
\dot x_c = -4 x_c &\text{ otherwise } &\\
x_c(0) = 0.05,\ x_d(0) = 0
\end{array}\right.
\end{equation}
where the (unique) transition rate is $R_{tot}(x_c,x_d) = \frac{1}{1.0 + e^{-x_c + 5.0}} + 0.1$ \Modif{with $\Pi((x_c,x_d),dy) = \delta_{(x_c,x_d+1)}(dy)$}. 
When the continuous component $x_c$ is increasing, the total rate $R_{tot}$ is bounded by $1$. However, when it is decreasing, the total rate is bounded by $R_{tot}(x_c(T_n),x_d(T_n))$ for $t\geq T_n$ where $T_n$ is the last jumping time. Hence, we expect the FJM to perform less fictitious jump when $x_d$ is odd. The fact that $x_c$ grows exponentially disfavors the FJM as we shall see.
\begin{figure}[ht!]
\centering
\includegraphics[width=0.9\linewidth]{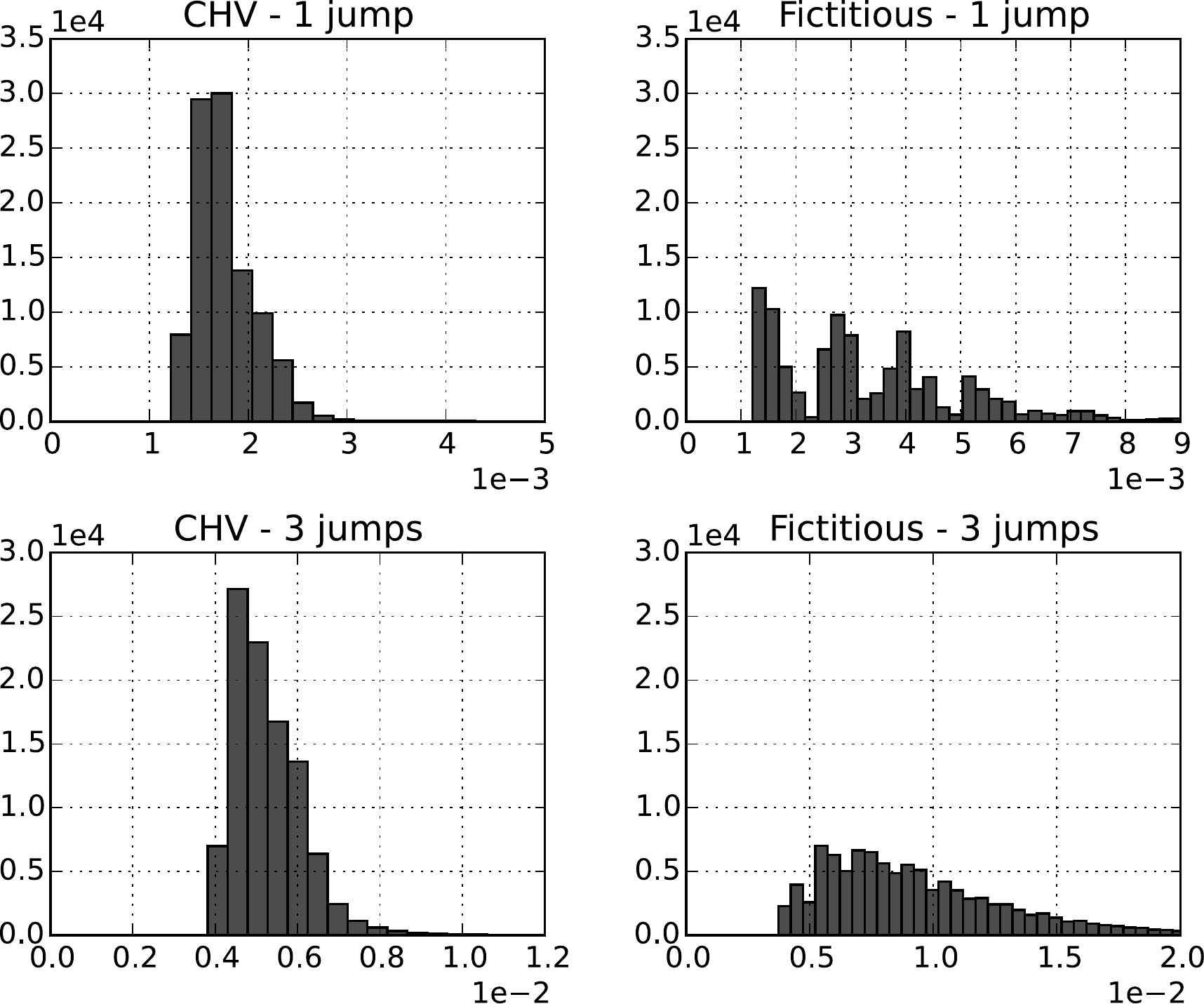}
\caption{Histograms of the computing time in seconds for each method CHV or FJM when computing 1 jump (top) and 3 jumps (bottom). The absolute/relative tolerance are $atol=1e-10$ and $rtol = 1e-10$. See text for a description of the stochastic process.}
\label{fig:ex3}
\end{figure}

To compare the two methods CHV and FJM, we use the adaptive ODE solver CVODE from SUNDIALS \cite{hindmarsh_sundials:_2005} for its ability to automatically switch between stiff and non-stiff methods. 
In each of the 4 cases, we compute 1 jump instant (resp. 3 jump instants) with each method and we perform $10^5$ realizations. We show the results in Figure~\ref{fig:ex3} where each plot is an histogram of the computing time in seconds of the jump instants. A first interesting result concerns the case \textit{FJM - 1 jump} where it seems that we can see, in the distribution, the fictitious jumps used to compute the first jump. Indeed, in our implementation, it takes roughly $1.5ms$ to solve the ODE with both methods. The top two plots show that the CHV method is faster for the reasons stated at the beginning of the section. The other 2 plots show that this result is not affected by the computation of the jumping time in the case $x_d$ odd, where the FJM should be slightly faster. Hence, the fastest method for simulating the process \eq{eq:ex3} would be to use our method when $x_d$ is even and the rejection method otherwise.

\section{Generalizations}
Several possible generalizations of our model suggest themselves. Basically, these entail a modification of the flow \eqref{eq:ode1} or of the jump process.

If the flow can be written as a Cauchy problem, then our method still applies albeit possibly in a different state space. For example, we can consider the case of a flow generated by a delay differential equation \cite{hale_introduction_1993} or if jump process depends on the history of the process $(X(t))$. These cases are handled very similarly to the case presented here, except that \eqref{eq:ode1}, \eqref{eq:th} are now delay differential equations. One could also look at the case of a flow generated by partial differential differential equations \cite{buckwar_exact_2011}.

The next possible generalization was introduced in
\cite{bratsun_delay-induced_2005} where a time delay was added between the initiation and the completion of some, or all, of the reactions. These delays were motivated by oscillations observed in gene regulation networks \cite{denault_wc2_2001}. A modified True Jump Method was given \cite{bratsun_delay-induced_2005} where a table is added to record when the delayed reactions are scheduled. The algorithm was later improved by \cite{cai_exact_2007,anderson_modified_2007} but the method is the same. Hence, the computation of the next reaction time follows from the same integral equation that we solved here and our results equally apply. 


\section{Discussion}
We have identified a new way to simulate a large class of Markov processes, the PDMPs, with a method which is versatile enough to be applied to various continuous part dynamics (ODE / PDE / delay differential equations) and any continuous \Modif{rate function $R_{tot}$}. Indeed the precision of the method is directly linked to the one of the ODE solver, which we assume is arbitrary given the state of the art concerning global error estimation \cite{skeel_thirteen_1986,viswanath_global_2001}.

As a particular case, our method provides a solution to cope with the notoriously difficult problem \cite{anderson_modified_2007} of the simulation of chemical reactions with time dependent propensity functions. Nevertheless, this problem is only formal as only the Fictitious Jump Method is exact in this case.

\Modif{Our result is an} important achievement for the following reasons. First, there is no need to modify existing ODE solvers to implement \Modif{our method}. Second, \Modif{it} provides a way to use an adaptive discretization to approximate \textit{both} the ODE flow and the integral equation. As such, \Modif{it} makes the method \cite{alfonsi_adaptive_2005,riedler_almost_2013} for the simulation of PDMPs not relevant. \Modif{It also allows to focus on the choice of the ODE solver without} \textit{ad-hoc} modification of existing tools, this is especially useful when studying stiff systems such as the slow-fast ones, largely present in the modeling of biology. Our results also makes it easy to consider flows driven by PDE or delay differential equations for example, without sacrificing for the numerical precision. Further, our results bridge the gap between the numerical methods for the simulation of PDMPs and the ones of ODE, making the first a subproblem of the second class. In particular, numerical error analysis \cite{hairer_solving_1996,skeel_thirteen_1986,viswanath_global_2001,cao_posteriori_2004} of ODE directly maps onto PDMPs.

The only relevant question which remains is whether to chose our method or the FJM when the continuous flow is not known analytically and when the total rate function $R_{tot}$ is continuous. This is strongly dependent on the total rate function variations and not so much on its absolute values. If the probability of having at least 2 fictitious jumps is 'high', then our new method might be the most efficient while being simpler to program. Finally, note that mixing the two methods can be effective by choosing the fastest one depending on the value of the discrete component as we suggested in our third example.

\section*{Acknowledgment}
This work was partially supported by the European Union Seventh
Framework Programme (FP7/2007-2013)
under grant agreement no. 269921 (BrainScaleS), no. 318723 (Mathemacs),
and by the ERC advanced grant
NerVi no. 227747 and by the Human Brain Project (HBP).

\SOS{Faugeras Taley Benoite Tanre?}
\appendix
\section{True Jump Method}\label{app:true}
We recall the true jump method \cite{graham_stochastic_2013} for the exact simulation of $(X(t))_{t\in\mathbb R^+}$ on a time interval $[0,T]$. This is the SSA Doob-Gillespie algorithm \cite{gillespie_exact_1977} when $F=0$, for which $\phi^t=Id$.

\textbf{Algorithm 1.} (True Jump Method)
\begin{enumerate}
\item Draw the initial value $X_0$
\item Draw $S_n$ according to an exponential distribution $\mathcal E(1)$
\item Set $T_n=\inf\left\lbrace u>T_{n-1}\ :\ \int_{T_{n-1}}^uR_{tot}(\phi^s(X(T_{n-1})))ds=S_n \right\rbrace$
and $X(t)=\phi^t(X(T_{n-1}))$ for $T_{n-1}\leq t<T_n$.
\item Draw $X(T_n)$ according to the law $\Pi(\phi^{T_n-T_{n-1}}(X(T_{n-1})),dy)$. 
\item If $T_n<T$, return to step 2.
\end{enumerate}

\section{Fictitious Jump Method}\label{app:fjm}
We recall the "rejection" method \cite{graham_stochastic_2013} for the exact simulation of $(X(t))_{t\in\mathbb R^+}$ on a time interval $[0,T]$. We assume that $\sup\limits_{x\in\mathbb R^d}R_{tot}(x)\leq\lambda<\infty$.

\textbf{Algorithm 2.} (Fictitious Jump Method)
\begin{enumerate}
\item Draw the initial value $X_0$
\item Draw $S_n$ according to an exponential distribution $\mathcal E(\lambda)$
\item Set $X(t)=\phi^t(X(T_{n-1}))$ for $T_{n-1}\leq t<T_{n-1}+S_n$.
\item
\begin{itemize}
\item either, with probability $1-\frac{R_{tot}(X(T_n-)))}{\lambda}$, set $X(T_n) = X(T_n-)$
\item or else draw $X(T_n)$ according to the law $\Pi(X(T_n-),dy)$
\end{itemize} 
\item If $T_n<T$, return to step 2.
\end{enumerate}

\appendix

\bibliographystyle{alpha}
\bibliography{/Users/rveltz/Dropbox/zotero}

\end{document}